\theoremstyle{plain}
\newtheorem{theorem}{Theorem}[section]
\newtheorem{lemma}[theorem]{Lemma}
\newtheorem{proposition}[theorem]{Proposition}
\newtheorem{corollary}[theorem]{Corollary}
\theoremstyle{definition}
\newtheorem{example}[theorem]{Example}
\theoremstyle{remark}
\renewcommand\footnotemark{}
\DeclareMathOperator{\Irr}{Irr}
\DeclareMathOperator{\tr}{tr}
\DeclareMathOperator{\Rad}{Rad}
\DeclareMathOperator{\Mat}{Mat}
\DeclareMathOperator{\Hom}{Hom}
\DeclareMathOperator{\rk}{rk}
\DeclareMathOperator{\ord}{ord}
\DeclareMathOperator{\Ker}{Ker}
\DeclareMathOperator{\Pf}{Pf}
\begin{document}

\title{Enumerating fibres of commutator words over $p$-groups}
\author{Matthew Levy
\\\\Bielefeld University}
\date{}
\maketitle

\begin{abstract}
We enumerate the fibres of commutator word maps over $p$-groups of nilpotency class less than $p$ with exponent $p$. We also give some examples and enumerate the fibre sizes of all word maps over $p$-groups of class $2$ with exponent $p$.

\end{abstract}


\section{Introduction}

Let $G$ be a finite group, $w(x_1,...,x_n)$ a group word and $w$ the associated word map $w:G^{(n)}\rightarrow G$. For $g\in G$ denote by $N_w^G(g)$ the number of solutions to $w= g$ and by $P_w^G(g)$ the probability that a random $n$-tuple $\textbf{g}=(g_{1},...,g_{n})\in G^{(n)}$ satisfies $w(\textbf{g})=g$, i.e.\
$$
P^G_w(g) = \frac{N^G_w(g)}{|G|^n}.
$$ 
The study of $P_w^G$ has attracted a lot of attention in recent years. For example, in \cite{DPSSh}, it was shown that for $1\neq w$ and a finite simple group $G$, $P_w^G(1)\rightarrow 1$ as $|G|\rightarrow\infty$. Results in \cite{LaSh} provide sharp bounds on $P_w^G$ for general words $w$ and finite simple groups $G$. If $G$ is abelian, then the word map
$$
w:G^{(n)}\rightarrow G,
$$ 
is a homomorphism and it is clear that 
$$
N_w^G(1)=|\mbox{Ker } w|=\frac{|G|^{n}}{|\mbox{Im }w|}\geq |G|^{n-1}
$$ 
and so $P_w^G(1)\geq\frac{1}{|G|}$. It is a conjecture of Alon Amit (see \cite{Abert}) that if $G$ is a nilpotent group then $P_w^G(1)\geq\frac{1}{|G|}$. In \cite{Levy1} we prove Amit's Conjecture in the special case where the nilpotency class is $2$. Note that since the statistics $N_w^G(1)$ and $P_w^G(1)$ are multiplicative under direct products, we may reduce to the case where $G$ is a finite $p$-group. In this paper we study the fibre sizes of a particular class of words over $p$-groups. In Section \ref{class2} we show that this is enough to determine the fibre sizes of all word maps over $p$-groups of class $2$ with exponent $p$.

For $t\in\mathbb{N}$, let $c_t$ denote the word map given by $c_t = [x_1,y_1]...[x_t,y_t]$ and, for $g\in G$, let $N_t^G(g)$ denote $N_{c_t}^G(g)$. Similarly let $P_t^G$ denote the \textit{$c_t$-distribution} on $G$, i.e.\
$$
P_t^G(g) = \frac{N_t^G(g)}{|G|^{2t}},
$$
and let $U^G$ be the uniform distribution on $G$ (i.e. $U^G(g) = 1/|G|$). By a classical result of
Frobenius from 1896 (see, for example, \cite{Isaacs}) we have
\begin{equation}\label{equation1}
N_t^G(g) = |G|^{2t-1}\sum_{\chi\in\Irr(G)}\frac{\chi(g)}{\chi(1)^t},
\end{equation}
where $1$ is the identity element of $G$ and we sum over the irreducible complex characters of $G$. In the main result of this paper, Theorem \ref{main}, we give explicit formulae to compute the numbers $N^G_t(g)$, for finite $p$-groups $G$ of nilpotency class less than $p$ with exponent $p$, in terms of the number of rational points of certain algebraic varieties. 

In \cite{GarionShalev} Garion \& Shalev prove the following.

\begin{proposition}[Proposition 1.1 \cite{GarionShalev}]
Let $G$ be a finite group. Then
$$
||P_1^G-U^G||_1 \leq  \left(\sum_{\chi\in\Irr(G), \chi\neq\text{Id}}\chi(1)^{-2}\right)^{1/2},
$$
where $||P_1^G-U^G||_1=\sum_{g\in G}|P_1^G(g)-U^G(g)|$.
\end{proposition}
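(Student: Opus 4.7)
The plan is to use the Frobenius formula (Equation \eqref{equation1} at $t=1$) to write $P_1^G - U^G$ as a character sum, and then apply Cauchy--Schwarz to replace the $L^1$ norm by an $L^2$ expression which collapses under the orthogonality relations for irreducible characters.

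First I would divide \eqref{equation1} through by $|G|^2$ at $t=1$ to obtain
$$
P_1^G(g) = \frac{1}{|G|}\sum_{\chi\in\Irr(G)}\frac{\chi(g)}{\chi(1)}.
$$
The trivial character contributes exactly $1/|G|=U^G(g)$, and therefore
$$
P_1^G(g) - U^G(g) = \frac{1}{|G|}\sum_{\chi\neq\text{Id}}\frac{\chi(g)}{\chi(1)}.
$$

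Next I would apply Cauchy--Schwarz on the $|G|$-element index set to bound
$$
\|P_1^G - U^G\|_1 = \sum_{g\in G}|P_1^G(g)-U^G(g)| \leq \sqrt{|G|}\,\Bigl(\sum_{g\in G}|P_1^G(g)-U^G(g)|^2\Bigr)^{1/2}.
$$
Expanding the square, interchanging the order of summation, and invoking the column orthogonality relations $\sum_{g\in G}\chi(g)\overline{\psi(g)}=|G|\,\delta_{\chi,\psi}$ collapses the double sum over $\chi,\psi\neq\text{Id}$ to its diagonal, yielding
$$
\sum_{g\in G}|P_1^G(g)-U^G(g)|^2 = \frac{1}{|G|^2}\cdot|G|\sum_{\chi\neq\text{Id}}\frac{1}{\chi(1)^2} = \frac{1}{|G|}\sum_{\chi\neq\text{Id}}\frac{1}{\chi(1)^2}.
$$
Substituting back cancels the factor $\sqrt{|G|}$ and delivers the stated inequality.

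There is no substantive obstacle: the only choice of substance is to use Cauchy--Schwarz to trade the $L^1$ distance for the $L^2$ distance, and this is precisely the quantity that character orthogonality is designed to evaluate. The square root on the right-hand side of the proposition is nothing more than the artefact of this Cauchy--Schwarz step.
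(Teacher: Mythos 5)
Your proof is correct and follows essentially the same route as the paper's argument for the analogous Proposition \ref{propdist}: compute the $L^2$ distance $\sum_{g}(P^G_1(g)-U^G(g))^2$ via the Frobenius character formula and orthogonality, then apply Cauchy--Schwarz to pass to the $L^1$ norm. (Minor quibble: the relation $\sum_{g\in G}\chi(g)\overline{\psi(g)}=|G|\delta_{\chi,\psi}$ is the first, i.e.\ row, orthogonality relation rather than column orthogonality, but the identity you use is the right one.)
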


As the authors remark, this bound has no content when the sum on the right hand side is greater than or equal to $1$. Since the non-trivial linear characters of $G$ contribute $|G/G'|^{-1}$ to the sum the result can only be useful for perfect groups. Since the maps $c_t$ take values in $G'$ it is not hard to adapt their proof and deduce the following.

\begin{proposition}\label{propdist}
Let $G$ be a finite group. Then
$$
||P_t^{G'}-U^{G'}||_1 \leq \left(\frac{|G'|}{|G|}\sum_{\chi\in\Irr(G), \chi(1)\neq 1}\chi(1)^{-2t}\right)^{1/2}.
$$
\end{proposition}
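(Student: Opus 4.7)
The plan is to adapt the Cauchy--Schwarz argument of Garion \& Shalev, exploiting that the image of $c_t$ lies in $G'$, so both $P_t^{G'}$ and $U^{G'}$ are probability distributions supported on $G'$. Setting $D := P_t^{G'} - U^{G'}$, Cauchy--Schwarz against the constant function $1$ on $G'$ gives
$$
\|D\|_1 = \sum_{g \in G'}|D(g)| \leq |G'|^{1/2}\left(\sum_{g\in G'}|D(g)|^2\right)^{1/2},
$$
which reduces the task to estimating $\|D\|_2^2$.

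Next, using the Frobenius formula (\ref{equation1}) I would write, for $g \in G'$,
$$
P_t^{G'}(g) = \frac{1}{|G|}\sum_{\chi\in\Irr(G)}\frac{\chi(g)}{\chi(1)^t},
$$
and split the sum according to the value of $\chi(1)$. Each of the $|G/G'|$ linear characters factors through $G/G'$, so $\chi(g) = 1$ for every $g \in G'$, and together they contribute $|G/G'|/|G| = 1/|G'| = U^{G'}(g)$. This isolates
$$
D(g) = \frac{1}{|G|}\sum_{\chi\in\Irr(G),\ \chi(1)>1}\frac{\chi(g)}{\chi(1)^t}
$$
for $g \in G'$. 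The same right-hand side vanishes on $G\setminus G'$ (there the linear characters of $G$ sum to zero by orthogonality, and $N_t^G$ vanishes), so the formula extends $D$ by $0$ to all of $G$, which is convenient for the next step.

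Finally, I would apply orthogonality of irreducible characters, $\sum_{g\in G}\chi(g)\overline{\psi(g)} = |G|\delta_{\chi,\psi}$, to the square of the above expression to collapse the double sum:
$$
\|D\|_2^2 = \frac{1}{|G|^2}\sum_{g\in G}\biggl|\sum_{\chi(1)>1}\frac{\chi(g)}{\chi(1)^t}\biggr|^2 = \frac{1}{|G|}\sum_{\chi\in\Irr(G),\ \chi(1)>1}\chi(1)^{-2t}.
$$
Substituting this into the Cauchy--Schwarz inequality above produces exactly the claimed bound. No step is technically deep; the principal bookkeeping point is the identification of the sum of linear characters on $G'$ with the uniform distribution $U^{G'}$, which is precisely what forces the omission of $\chi(1) = 1$ in the final sum and thereby rescues the inequality from being vacuous whenever $G$ has non-trivial abelianisation.
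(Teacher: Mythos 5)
Your proof is correct and follows essentially the same strategy as the paper: Cauchy--Schwarz to pass from the $L^1$ to the $L^2$ norm of $P_t^{G'}-U^{G'}$, followed by a second-moment computation resting on character orthogonality. The only difference is organisational --- the paper cites Lemma 2.1 of Garion \& Shalev for $\sum_{g\in G'}P_t^G(g)^2$ and then does the variance algebra, whereas you subtract the linear-character contribution (which you correctly identify with $U^{G'}$) first and apply orthogonality directly, making the argument self-contained.
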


If the right hand side in Proposition \ref{propdist} is close to zero, then the fibres of the maps $c_t$ are roughly the same size and the values are uniformly distributed. The proof follows easily from the following lemmas.
\begin{lemma}\label{prevlemma}
Let $G$ be a finite group. Then
$$
\sum_{g\in G'} P^G_t(g)^2= \frac{1}{|G'|}+\frac{1}{|G|}\sum_{\chi\in\Irr(G),\chi(1)\neq 1}\chi(1)^{-2t}.
$$
\end{lemma}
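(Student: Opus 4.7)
The plan is to reduce the claim to a routine character-theoretic calculation combining Frobenius's formula (\ref{equation1}) with the first orthogonality relation. The key initial observation is that any product of commutators lies in $G'$, so $N_t^G(g)=0$, and hence $P_t^G(g)=0$, for every $g\notin G'$. Consequently we may extend the sum on the left-hand side to all of $G$ without changing its value:
$$\sum_{g\in G'}P_t^G(g)^2 \;=\; \sum_{g\in G}P_t^G(g)^2.$$

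Next, I would divide (\ref{equation1}) through by $|G|^{2t}$ to obtain the character expansion
$$P_t^G(g) \;=\; \frac{1}{|G|}\sum_{\chi\in\Irr(G)}\frac{\chi(g)}{\chi(1)^t}.$$
Since $P_t^G(g)$ is real, squaring may be rewritten as multiplying by $\overline{P_t^G(g)}$, and the identity $\overline{\chi(g)} = \chi(g^{-1})$ converts the result into a double character sum. Interchanging the order of summation and applying the first orthogonality relation $\sum_{g\in G}\chi(g)\overline{\psi(g)}=|G|\delta_{\chi,\psi}$ collapses the inner sum over $g$, leaving
$$\sum_{g\in G}P_t^G(g)^2 \;=\; \frac{1}{|G|}\sum_{\chi\in\Irr(G)}\chi(1)^{-2t}.$$

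To finish, I would split the character sum according to whether $\chi$ is linear. The $|G/G'|$ linear characters of $G$ each contribute $1$, yielding a combined term $|G/G'|/|G| = 1/|G'|$; the remaining non-linear characters contribute exactly $\frac{1}{|G|}\sum_{\chi(1)\neq 1}\chi(1)^{-2t}$, assembling into the claimed identity. The argument is essentially a formal orthogonality computation; the only point requiring care is the complex-conjugation bookkeeping when squaring the character expansion of $P_t^G(g)$, and there is no genuine obstacle.
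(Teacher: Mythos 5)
Your proof is correct and follows essentially the same route as the paper: the paper's one-line proof simply cites Lemma 2.1 of Garion--Shalev (which is exactly the orthogonality computation of $\sum_{g\in G}P^G_t(g)^2$ you carry out) together with the observation that $P^G_t$ vanishes off $G'$. You have merely written out explicitly the computation that the paper delegates to the citation, and all steps (extending the sum to $G$, orthogonality, splitting off the $|G/G'|$ linear characters) are sound.
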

\begin{proof}
The proof follows from Lemma 2.1 in \cite{GarionShalev} and noting that $P^G_t(g) = 0$ for $g\not\in G'$.
\end{proof}
\begin{lemma}
Let $G$ be a finite group. Then
$$
\sum_{g\in G'}\left(P^G_t(g)-\frac{1}{|G'|}\right)^2=\frac{1}{|G|}\sum_{\chi\in\Irr(G),\chi(1)\neq 1}\chi(1)^{-2t}.
$$
\end{lemma}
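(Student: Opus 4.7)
The plan is to expand the square on the left-hand side and then apply Lemma \ref{prevlemma} directly. Specifically, I would write
$$
\sum_{g\in G'}\left(P^G_t(g)-\frac{1}{|G'|}\right)^2 = \sum_{g\in G'} P^G_t(g)^2 \;-\; \frac{2}{|G'|}\sum_{g\in G'} P^G_t(g) \;+\; \sum_{g\in G'}\frac{1}{|G'|^2}.
$$

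Next I would simplify the two easy terms. Since the commutator map $c_t$ takes values in $G'$, every solution $(x_1,y_1,\ldots,x_t,y_t)\in G^{(2t)}$ contributes to some fibre over $G'$, so $\sum_{g\in G'} P^G_t(g)=1$. The last sum is trivially $|G'|/|G'|^2 = 1/|G'|$. Substituting these gives
$$
\sum_{g\in G'} P^G_t(g)^2 - \frac{2}{|G'|} + \frac{1}{|G'|} = \sum_{g\in G'} P^G_t(g)^2 - \frac{1}{|G'|}.
$$

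Finally I would invoke Lemma \ref{prevlemma}, which evaluates $\sum_{g\in G'} P^G_t(g)^2$ as $\tfrac{1}{|G'|} + \tfrac{1}{|G|}\sum_{\chi\in\Irr(G),\chi(1)\neq 1}\chi(1)^{-2t}$; the $1/|G'|$ terms cancel and yield the claimed identity. There is no real obstacle here — the whole statement is an algebraic rearrangement of Lemma \ref{prevlemma}, using only the fact that $c_t$ lands in $G'$ and that $P^G_t$ is a probability distribution on that set. The proof is essentially a one-line computation.
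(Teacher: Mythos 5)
Your proof is correct and follows exactly the same route as the paper: expand the square, use $\sum_{g\in G'}P^G_t(g)=1$ and $\sum_{g\in G'}|G'|^{-2}=|G'|^{-1}$, then substitute the value of $\sum_{g\in G'}P^G_t(g)^2$ from the preceding lemma. Nothing is missing.
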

\begin{proof}
By Lemma \ref{prevlemma},
\begin{eqnarray*}
\sum_{g\in G'}\left(P^G_t(g)-\frac{1}{|G'|}\right)^2 &=& \sum_{g\in G'}P^G_t(g)^2-\frac{2}{|G'|}\sum_{g\in G'}P^G_t(g)+\frac{1}{|G'|}\\ &=&\frac{1}{|G|}\sum_{\chi\in\Irr(G),\chi(1)\neq 1}\chi(1)^{-2t}
\end{eqnarray*}
since $\sum_{g\in G'}P^G_t(g) = 1$.
\end{proof}

\begin{proof}[Proof of Proposition \ref{propdist}]
This follows from the Cauchy-Schwarz inequality,
$$
(||P_t^{G'} - U^{G'}||_1)^2 = \left(\sum_{g\in G'}(P_t(g)-\frac{1}{|G'|}\right)^2\leq|G'|\sum_{g\in G'}\left(P_t(g)-\frac{1}{|G'|}\right)^2
$$
and the previous lemma.
\end{proof}

In Section \ref{mainsec} we will develop formulae for the fibre sizes of the word maps $c_t$ over $p$-groups of nilpotency class less than $p$ with exponent $p$. Moreover, these results extend, more generally, to $p$-groups obtained by `base extension'. In Section \ref{class2} we will determine the fibre sizes of all word maps over $p$-groups of class $2$ with exponent $p$. We will then give some examples in Section \ref{examples}. 

\section{Enumerating sizes of fibres}\label{mainsec}

Let $G$ be a finite $p$-group and, for each $i\in\mathbb{N}$, write $\Irr^i(G) = \{$irreducible complex characters of $G$ of degree $p^i\}$ and $\Irr(G)$ for the set of all irreducible complex characters. 
For a complex variable $s$ and $g\in G$ write
\begin{equation}\label{twistedzeta}
\zeta^i_G(s,g) = \sum_{\chi\in\Irr^i(G)}\frac{\chi(g)}{\chi(1)^s}.
\end{equation}
We will also write
\begin{equation}\label{twistedzetamain}
\zeta_G(s,g) = \sum_{\chi\in\Irr(G)}\frac{\chi(g)}{\chi(1)^s}=\sum_{i\in\mathbb{N}}\zeta^i_G(s,g).
\end{equation}
It is clear from equations (\ref{equation1}) and (\ref{twistedzetamain}) that
$$
N^G_t(g) = |G|^{2t-1}\zeta_G(t,g)
$$
and that
\begin{equation}\label{equationprobzeta}
P^G_t(g) = \frac{1}{|G|}\zeta_G(t,g).
\end{equation}
In this section we prove Theorem \ref{main} which allows us to compute expressions like equations (\ref{twistedzetamain}) and (\ref{equationprobzeta}) for finite $p$-groups of nilpotency class less than $p$ with exponent $p$.

Note that when $s=1$ and $g=1$, the identity of $G$, the sum $\zeta_G(1,1)$ is simply the class number $k(G)$ of $G$ and that if $g$ is not in the derived group of $G$ then $\zeta_G(s,g)=0$ since $N_s^G(g)=0$, see equation (\ref{equation1}). An analogue of equation (\ref{twistedzetamain}), a \textit{twisted} zeta function, is studied by Jaikin-Zapirain c.f.\ \cite[Theorem 1.2]{Jaikinzeta} where he shows that it is a rational function in $p^{-s}$ for any $g\in G$ where $G$ is a FAb uniform pro-$p$ group. 

We also note that the twisted zeta function in equation (\ref{twistedzetamain}) is multiplicative under direct products in the following sense: for $g_1\in G_1$ and $g_2\in G_2$ where $G_1$ and $G_2$ are groups we have
$$
\zeta_{G_1\times G_2}(s,g_1 g_2) = \zeta_{G_1}(s,g_1).\zeta_{G_2}(s,g_2).
$$

In \cite{VollOBrien} O'Brien \& Voll use the Kirillov orbit method to enumerate the irreducible complex characters, of each degree, of finite $p$-groups of nilpotency class less than $p$. If the group $G$ is of exponent $p$, then the number of characters, of each degree, of $G$ can be described in terms of the number of rational points of certain algebraic varieties. 
We can analogously compute the sums $\zeta_G^i(s,g)$ by counting the numbers of rational points of the algebraic varieties considered by O'Brien \& Voll that intersect a hyperplane characterized by $g$.

We now fix a $p$-group $G$ of nilpotency class less than $p$ with exponent $p$ and an element $g\in G'$.

The Lazard correspondence establishes an order-preserving bijection between finite $p$-groups of nilpotency class $c<p$ and finite nilpotent Lie rings of $p$-power order and class $c<p$; cf.\ \cite[Example 10.24]{Khukhro}. Let $c<p$ be the nilpotency class of $G$. Let $\mathfrak{g}=\log(G)$ be the finite Lie ring associated to $G$ by the Lazard correspondence. The Kirillov orbit method gives a correspondence between characters of $G$ and orbits in $\hat{\mathfrak{g}}:=\Hom_{\mathbb{Z}}(\mathfrak{g},\mathbb{C}^*)$, the Pontryagin dual of $\mathfrak{g}$, under the co-adjoint action of $G$ on $\hat{\mathfrak{g}}$; cf.\ \cite[Theorem 2.6]{BoySab} or \cite[Theorem 4.4]{Jon}. Under this correspondence each orbit $\Omega$ of size, say, $p^{2i}$ gives rise to a character $\chi_{\Omega}$ of degree $p^i$ and all characters are of this form. We have
$$
\zeta^i_G(s,g)=\sum_{\Omega\subseteq\hat{\mathfrak{g}}, |\Omega|=p^{2i}}\frac{\chi_{\Omega}(g)}{\chi_{\Omega}(1)^s},\\
$$
where we sum over orbits $\Omega$ of $\hat{\mathfrak{g}}$ and $\chi_{\Omega}$ is the character of $G$ that corresponds to the orbit $\Omega$. For each $\omega\in\Omega$ we denote $\chi_{\omega} = \chi_{\Omega}$. 

For each $\omega\in\hat{\mathfrak{g}}$ we write $B_{\omega}$ for the bi-additive, skew-symmetric form $B_{\omega}:\mathfrak{g}\times\mathfrak{g}\rightarrow\mathbb{C}^*$, $(u,v)\mapsto\omega([u,v])$ and $\Rad(B_{\omega})$ for the radical of $B_{\omega}$. Note that the centre $\mathfrak{z}$ of $\mathfrak{g}$ is contained in $\Rad(B_{\omega})$ and the form $B_{\omega}$ only depends on the restriction of $\omega$ to $\mathfrak{g}'$. From \cite{VollOBrien} we have, for each $i$,
\begin{eqnarray*}
\zeta^i_G(s,g)&=&p^{-2i}\sum_{\omega\in\hat{\mathfrak{g}}, |\mathfrak{g}:\Rad(B_{\omega})|=p^{2i}}\frac{\chi_{\omega}(g)}{\chi_{\omega}(1)^s}\\
&=&|\mathfrak{g}/\mathfrak{g'}|p^{-2i}\sum_{\omega\in\widehat{\mathfrak{g'}}, |\mathfrak{g}:\Rad(B_{\omega})|=p^{2i}}\frac{\chi_{\omega}(g)}{\chi_{\omega}(1)^s}\\
&=&|\mathfrak{g}/\mathfrak{g'}|p^{-2i}\sum_{\omega\in\widehat{\mathfrak{g'}}, |\Rad(B_{\omega}):\mathfrak{z}|=p^{-2i}|\mathfrak{g}/\mathfrak{z}|}\frac{\chi_{\omega}(g)}{\chi_{\omega}(1)^s}.\\
\end{eqnarray*}
By the Kirillov orbit method we have, for all $\omega\in\hat{\mathfrak{g}}$ such that $|\mathfrak{g}:\Rad(B_{\omega})|=p^{2i}$,
$$
\chi_{\omega}(g) = p^{-i}\sum_{\upsilon\in\Omega_{\omega}}\upsilon(g),
$$
where $\Omega_{\omega}$ is the orbit containing $\omega$ and we identify $g$ with an element of $\mathfrak{g}=\log(G)$. Hence
$$
\zeta^i_G(s,g)=|\mathfrak{g}/\mathfrak{g'}|p^{-i(3+s)}\sum_{\omega\in\widehat{\mathfrak{g'}}, |\Rad(B_{\omega}):\mathfrak{z}|=p^{-2i}|\mathfrak{g}/\mathfrak{z}|}\sum_{\upsilon\in\Omega_{\omega}}\upsilon(g).
$$
 
Theorem B in \cite{VollOBrien} gives a geometric characterization of this sum in terms of the numbers of certain rational points of rank varieties of matrices of linear forms. Assume that $\mathfrak{o}$ is a compact discrete valuation ring of characteristic zero with maximal ideal $\mathfrak{p}$ and residue field $\bold{k} = \mathfrak{o}/\mathfrak{p}$ of characteristic $p$. Suppose that $\mathfrak{g}$ is a finite, nilpotent $\mathfrak{o}$-Lie algebra of class $c<p$ and that both $\mathfrak{g}/\mathfrak{z}$ and $\mathfrak{g}'$ are annihilated by $\mathfrak{p}$. Set $a:=\rk_{\bold{k}}(\mathfrak{g}/\mathfrak{z})$ and $b:=\rk_{\bold{k}}(\mathfrak{g'})$ and fix bases $\bold{e}=\{e_1,...,e_a\}$ and $\bold{f}=\{f_1,...,f_b\}$ for $\mathfrak{g}/\mathfrak{z}$ and $\mathfrak{g'}$ respectively. Choose structure constants $\lambda_{ij}^k\in\bold{k}$ such that
$$
[e_i,e_j] = \sum_{k=1}^{b}\lambda_{ij}^kf_k
$$
and $\lambda_{ij}^k = -\lambda_{ji}^k$ for all $i,j\in\{1,...,a\}$ and $k\in\{1,...,b\}$. Let $\bold{Y} = (Y_1,...,Y_b)$ be independent variables and define the `commutator matrix' (with respect to $\bold{e}$ and $\bold{f}$) $B(\bold{Y})\in\Mat_a(\bold{k}[\bold{Y}])$ given by
$$
B(\bold{Y})_{ij}:=\sum_{k=1}^{b}\lambda_{ij}^{k}Y_k
$$
for all $i,j\in\{1,...,a\}$. For $\bold{y}=(y_1,...,y_b)\in\bold{K}^b$, where $\bold{K}$ is any finite extension of $\bold{k}$, write $B(\bold{y})\in\Mat_a(\bold{K})$ for the matrix obtained by evaluating the variables $Y_i$ at $y_i$. Note that the matrices $B(\bold{y})$ are skew-symmetric, have even rank
and that $\det(B(\bold{Y}))$ is a square in $\bold{k}[\bold{Y}]$, whose square root $\Pf(B(\bold{Y})):=\sqrt{\det(B(\bold{Y}))}$ is the \textit{Pfaffian} of $B(\bold{Y})$. If $a$ is odd then $\Pf(B(\bold{Y}))=0$. 

Fix a non-trivial additive character $\phi:\bold{k}\rightarrow\mathbb{C}^*$. For $a\in\bold{k}$ define $\phi_a(x) = \phi(ax)$. The map $a\mapsto\phi_a$ is an isomorphism between $\bold{k}$ and its Pontryagin dual $\widehat{\bold{k}}$. 
We have an isomorphism between $\mathfrak{g}'$ and its dual $\widehat{\mathfrak{g}'}$ and also a canonical isomorphism between $\mathfrak{g}'$ and its linear dual $\Hom_{\mathbb{F}_q}(\mathfrak{g}',\bold{k})$. We now fix an isomorphsim $\psi_1:\widehat{\mathfrak{g}'}\rightarrow\Hom_{\bold{k}}(\mathfrak{g}',\bold{k})$. The dual $\bold{k}$-basis $f^{\vee} = (f_k^{\vee})$ for $\Hom_{\bold{k}}(\mathfrak{g}',\bold{k})$ gives a coordinate system
\begin{eqnarray*}
\psi_2:\Hom_{\bold{k}}(\mathfrak{g}',\bold{k})&\rightarrow&\bold{k}^b;\\
y=\sum_{k=1}^{b}y_kf_f^{\vee}&\mapsto&\bold{y}=(y_1,...,y_b).
\end{eqnarray*}
Set $\psi:=\psi_2\circ\psi_1:\widehat{\mathfrak{g}'}\rightarrow\bold{k}^b$, an isomorphism. Under this isomorphism we may identify elements $\bold{y}\in\bold{k}^b$ with elements $\omega_{\bold{y}}\in\widehat{\mathfrak{g}'}$. 


Assume that $\mathfrak{O}$ is an unramified extension of $\mathfrak{o}$, with maximal ideal $\mathfrak{P}$. Write $\mathfrak{g}(\mathfrak{O})$ for $\mathfrak{g}\otimes_{\mathfrak{o}}\mathfrak{O}$ and $\mathfrak{z}(\mathfrak{O})$ for $\mathfrak{z}\otimes_{\mathfrak{o}}\mathfrak{O}$. We identify the residue field $\mathfrak{O}/\mathfrak{P}$, a finite extension of $\bold{k}$, with $\mathbb{F}_q$. The derived $\mathfrak{O}$-Lie algebra $\mathfrak{g}(\mathfrak{O})'$ as well as the quotient $\mathfrak{g}(\mathfrak{O})/\mathfrak{z}(\mathfrak{O})$ are annihilated by $\mathfrak{P}$. We denote the corresponding bases, $\bold{e}\otimes_{\bold{k}}1$ for $\mathfrak{g}(\mathfrak{O})/\mathfrak{z}(\mathfrak{O})$ and $\bold{f}\otimes_{\bold{k}}1$ for $\mathfrak{g}(\mathfrak{O})'$, obtained by this base extension by $\bold{e}$ and $\bold{f}$ respectively. We consider $\bold{e}$ and $\bold{f}$ as $\mathbb{F}_q$ bases for the respective $\mathbb{F}_q$-vector spaces of dimensions $a$ and $b$. The group $G=G(\mathfrak{O}):=\exp(\mathfrak{g}(\mathfrak{O}))$ has the property that both $G(\mathfrak{O})'$ and $G(\mathfrak{O})/Z(G(\mathfrak{O}))$ have exponent $p$. It follows from \cite[Theorem B]{VollOBrien} that
$$
\zeta^i_G(s,g)=|\mathfrak{g}/\mathfrak{g'}|p^{-i(3+s)}\sum_{\bold{y}\in\mathbb{F}_q^b,  \rk(B(\bold{y}))=2i}\sum_{\bold{v}\in\Omega_{\omega_\bold{y}}}\bold{v}(g).
$$
Since $|\Omega_{\omega_\bold{y}}| = p^{2i}$ we have
$$
\zeta^i_G(s,g)=|\mathfrak{g}/\mathfrak{g'}|p^{-i(1+s)}\sum_{\bold{y}\in\mathbb{F}_q^b,  \rk(B(\bold{y}))=2i}\omega_{\bold{y}}(g).
$$
Let 
\begin{equation}\label{eqnK}
K^i_G(g)=\#\{\bold{y}\in\mathbb{F}_q^b:  \rk(B(\bold{y}))=2i, g\in\Ker(\omega_{\bold{y}})\}
\end{equation} 
and 
\begin{equation}\label{eqnV}
V^i_G(g)=\#\{\bold{y}\in\mathbb{F}_q^b:  \rk(B(\bold{y}))=2i, \ord(\omega_{\bold{y}}(g))\neq 1\}.
\end{equation}
Note that $\sum_i K^i_G(1) = q^b$ and $V_G^i(1)=0$ for all $i$ whilst for $1\neq g\in G$ we have $\sum_i K^i_G(g)=q^{b-1}$. 

We write $K_G(g)$ and $V_G(g)$ for the vectors $(K^i_G(g))_i$ and $(V^i_G(g))_i$ respectively. The numbers $\omega_{\bold{y}}(g)$ are $p$-th roots of unity. Since the sum of the $q-1$ Galois conjugates of non-trivial $q$-th roots of unity is $-1$ we have the following:
\begin{theorem}\label{main}
Let $\mathfrak{o}$ be a compact discrete valuation ring of characteristic zero with residue field $\bold{k}$ of characteristic $p$ and let $\mathfrak{g}$ be a finite, nilpotent $\mathfrak{o}$-Lie algebra of class $c<p$. Assume that $\mathfrak{g}'\cong\bold{k}^b$ and $\mathfrak{g}/\mathfrak{z}\cong\bold{k}^a$ as $\bold{k}$-vector spaces. Let $\mathfrak{O}$ be a finite, unramified extension of $\mathfrak{o}$, with residue field isomorphic to $\mathbb{F}_q$. Write $G:=G(\mathfrak{O})$ for the group associated with $\mathfrak{g}(\mathfrak{O})$ under the Lazard correspondence. Then, for each $i$,
$$
\zeta^i_G(s,g)=|G/G'|p^{-i(1+s)}(K^i_G(g)-\frac{1}{q-1}V^i_G(g)),
$$
where $K^i_G$ and $V^i_G$ are defined in equations (\ref{eqnK}) and (\ref{eqnV}).
\end{theorem}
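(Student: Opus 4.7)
The plan is to start from the formula
\[
\zeta^i_G(s,g)=|\mathfrak{g}/\mathfrak{g'}|p^{-i(1+s)}\sum_{\bold{y}\in\mathbb{F}_q^b,\ \rk(B(\bold{y}))=2i}\omega_{\bold{y}}(g)
\]
derived immediately before the statement, use $|\mathfrak{g}/\mathfrak{g}'|=|G/G'|$ from the Lazard correspondence, and evaluate the inner character sum by exploiting the $\mathbb{F}_q^*$-scaling action on the parameter space together with orthogonality for the fixed nontrivial additive character $\phi_q$ on $\mathbb{F}_q$.

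The key observation is that the rank-$2i$ locus $Z:=\{\bold{y}\in\mathbb{F}_q^b:\rk(B(\bold{y}))=2i\}$ is stable under scaling by $\mathbb{F}_q^*$, because $B(c\bold{y})=cB(\bold{y})$, and through the coordinate isomorphism $\psi$ one has $\omega_{c\bold{y}}(g)=\phi_q(c\,\bold{y}\cdot\bold{g})$ for the natural pairing $\bold{y}\cdot\bold{g}\in\mathbb{F}_q$. Thus $Z$ decomposes into $\mathbb{F}_q^*$-orbits of size $q-1$, together possibly with the singleton $\{\bold 0\}$ when $i=0$. On each orbit, set $\alpha:=\bold{y}\cdot\bold{g}\in\mathbb{F}_q$ for an orbit representative. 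If $\alpha=0$, then every element of the orbit has $\omega_{\bold{y}}(g)=1$ and lies in $\Ker(\omega_{\bold{y}})$, contributing $1$ to the inner sum; if $\alpha\neq 0$, the map $c\mapsto c\alpha$ is a bijection of $\mathbb{F}_q^*$, so the orbit sum becomes $\sum_{\beta\in\mathbb{F}_q^*}\phi_q(\beta)=-1$, by the standard identity $\sum_{\beta\in\mathbb{F}_q}\phi_q(\beta)=0$ for the nontrivial character $\phi_q$.

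Summing over all orbits, the $\alpha=0$ orbits together contribute exactly $K^i_G(g)$, while each $\alpha\neq 0$ orbit contributes $-1$. Since each non-null orbit has size $q-1$ and together they exhaust the $V^i_G(g)$ elements of $Z$ on which $\omega_{\bold{y}}(g)\neq 1$, they jointly contribute $-V^i_G(g)/(q-1)$. Substituting $K^i_G(g)-V^i_G(g)/(q-1)$ for the inner sum and using $|\mathfrak{g}/\mathfrak{g}'|=|G/G'|$ yields the theorem. The main point requiring care is the orbit bookkeeping together with the consistent identification of $\Ker(\omega_{\bold{y}})$ via the pairing $\bold{y}\cdot\bold{g}$; beyond that, everything reduces to elementary character orthogonality, so no serious obstacle arises.
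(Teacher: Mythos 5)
Your argument is correct and is essentially the paper's own proof: the paper's justification consists of the displayed identity $\zeta^i_G(s,g)=|\mathfrak{g}/\mathfrak{g}'|\,p^{-i(1+s)}\sum_{\mathbf{y}}\omega_{\mathbf{y}}(g)$ followed by the remark that the nontrivial character values fall into groups of $q-1$ summing to $-1$, which is precisely your $\mathbb{F}_q^*$-orbit decomposition made explicit via $B(c\mathbf{y})=cB(\mathbf{y})$ and $\sum_{\beta\in\mathbb{F}_q^*}\phi_q(\beta)=-1$. The one point to watch (shared with the paper's own wording) is that your identification of the $\alpha=0$ orbits with $K^i_G(g)$ and of the $\alpha\neq 0$ orbits with the $V^i_G(g)$ elements is literally valid only when $q=p$, or when the defining conditions $g\in\Ker(\omega_{\mathbf{y}})$ and $\ord(\omega_{\mathbf{y}}(g))\neq 1$ in equations (\ref{eqnK}) and (\ref{eqnV}) are read as $\mathbf{y}\cdot\mathbf{g}=0$ and $\mathbf{y}\cdot\mathbf{g}\neq 0$ (the hyperplane reading the paper adopts in Section \ref{examples}), since for $q>p$ a nontrivial additive character of $\mathbb{F}_q$ has nonzero elements in its kernel and the two readings diverge.
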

\noindent This follows immediately from the discussion above.



\section{Nilpotent class $2$ groups}\label{class2}

Let $G$ be a finite group, $w(x_1,...,x_n)$ a group word and $w$ the associated word map. Recall that $N_w^G(g)$ is the number of solutions to $w= g$ 
and that $P_w^G(g)$ is the probability that a random $n$-tuple $\textbf{g}=(g_{1},...,g_{n})\in G^{(n)}$ satisfies $w(\textbf{g})=g$. 
The following corollary follows immediately from the results in \cite{Levy1}:

\begin{corollary}[\cite{Levy1}]
Let $G$ be a finite $p$-group of nilpotency class $2$ with exponent $p$ and let $w$ be a group word. Then there exists a word $v$ of the following form:
\begin{itemize}
\item[i)] $v = x$; or
\item[ii)] $v = c_t$ for some $t$,
\end{itemize}
such that $P_w^G(g) = P_v^G(g)$ for all $g\in G$.
\end{corollary}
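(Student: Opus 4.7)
The plan is to reduce a general word $w(x_1,\ldots,x_n)$ to canonical form and then appeal to the classification of word-map fibre distributions worked out in \cite{Levy1}. Since $G$ has nilpotency class $2$ and exponent $p$, collection gives, modulo the laws of $G$, a Hall--Petresco normal form $w \equiv x_1^{a_1}\cdots x_n^{a_n}\prod_{i<j}[x_i,x_j]^{b_{ij}}$, with the exponents $a_i\in\mathbb{F}_p$ and the matrix $B=(b_{ij})\in\Mat_n(\mathbb{F}_p)$ well-defined (the matrix being read as a skew-symmetric form). The $c_t$-distribution on $G$ then depends only on these two pieces of data, so the task is to show that suitable changes of variables on $G^n$ normalise $(a_i,B)$ to one of two types.

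In the first case, some $a_i$ is non-zero modulo $p$; say $a_1\neq 0$. Here the plan is to use that $a_1$ is invertible in $\mathbb{F}_p$, so that for any fixed values of $x_2,\ldots,x_n$ and any target $g\in G$, the equation $w(x_1,x_2,\ldots,x_n)=g$ in the unknown $x_1$ has a unique solution. This uses the identities $(xy)^p=x^p y^p$ and $[x_1,x_j]^{b_{1j}}$ being central, which together make the map $x_1\mapsto w(x_1,x_2,\ldots,x_n)$ a bijection $G\to G$ of class-$2$ exponent-$p$ groups. It follows that every fibre of $w$ has size $|G|^{n-1}$, so $P^G_w = U^G = P^G_x$, giving $v=x$.

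In the second case all $a_i\equiv 0\pmod p$, so $w$ lies in $[F_n,F_n]$ and $w(G^n)\subseteq G'$. Now one applies change of variables $x_i\mapsto\prod_j x_j^{c_{ij}}$, where $(c_{ij})\in\GL_n(\mathbb{F}_p)$; as shown in \cite{Levy1}, such a substitution is a measure-preserving bijection of $G^n$ in the class-$2$, exponent-$p$ setting, and it transforms the skew form $B$ into $CBC^T$. Since $p$ is odd, the classification of skew-symmetric bilinear forms over $\mathbb{F}_p$ puts $B$ into block-diagonal form with $t:=\tfrac12\rk(B)$ standard hyperbolic blocks and a zero block. The resulting normal form is precisely the word $c_t=[x_1,y_1]\cdots[x_t,y_t]$, and $P^G_w = P^G_{c_t}$.

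The substantive input, and the main place where \cite{Levy1} is needed, is the verification that substitutions of the two kinds above genuinely induce volume-preserving bijections on $G^n$ and that the resulting distributions depend only on the normalised invariants; the rest is a change-of-basis argument for skew-symmetric forms over $\mathbb{F}_p$. The class-$2$ and exponent-$p$ hypotheses are both essential — the former to keep commutators central and thus make the change of variables linear on the Heisenberg part, and the latter to ensure $(xy)^p=x^p y^p$ so that the $a_i$-component behaves linearly.
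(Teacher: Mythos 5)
Your argument is correct and is essentially the one the paper is invoking: the paper gives no proof of its own beyond citing \cite{Levy1}, and the route you take — collect $w$ into the normal form $x_1^{a_1}\cdots x_n^{a_n}\prod_{i<j}[x_i,x_j]^{b_{ij}}$, then either use an invertible exponent $a_i$ to make every fibre have size $|G|^{n-1}$ (so $P_w^G=P_x^G$), or apply a $\GL_n(\mathbb{F}_p)$ change of variables to put the alternating form $B$ into hyperbolic normal form and obtain $c_t$ with $t=\tfrac12\rk(B)$ — is exactly the argument of the cited result. The only point worth tightening is the injectivity of $x_1\mapsto w(x_1,x_2,\ldots,x_n)$ in case (i): it follows by noting that $f(x_1u)=f(x_1)$ forces $u^{a_1}\in G'$, hence $u\in G'$ is central, hence $u^{a_1}=1$ and $u=1$ by exponent $p$; your appeal to $(xy)^p=x^py^p$ alone does not quite deliver this.
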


By Theorem \ref{main} we have determined the fibre sizes of all word maps over all $p$-groups of nilpotency class $2$ with exponent $p$:

\begin{corollary}
Let $p$ be an odd prime and let $G$ be a finite $p$-group of nilpotency class $2$ with exponent $p$ and let $w$ be a group word. Then either
\begin{itemize}
\item[i)] $P_w^G(g) = \frac{1}{|G|}$ for all $g\in G$; or
\item[ii)] $P_w^G(g) = \frac{1}{|G|}\zeta_G(t,g)$ for some $t$ depending on $w$.
\end{itemize}
\end{corollary}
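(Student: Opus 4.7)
The plan is to chain the preceding corollary (from \cite{Levy1}) with the formula $P_t^G(g) = \frac{1}{|G|}\zeta_G(t,g)$ coming from equation (\ref{equationprobzeta}), and then invoke Theorem \ref{main} to justify the word ``determined''. In more detail: the preceding corollary gives a dichotomy for any word $w$ over a class-$2$ exponent-$p$ group, replacing $w$ by a word $v$ which is either $v=x$ or $v=c_t$ for some $t$, with $P_w^G(g)=P_v^G(g)$ for every $g\in G$. So I only have to check the conclusion in these two cases.

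In the first case $v=x$, the associated word map is the identity map $G\to G$, which is a bijection. Thus $N_v^G(g)=1$ and $P_v^G(g)=P_w^G(g)=1/|G|$ for every $g\in G$, giving alternative (i). In the second case $v=c_t$, equation (\ref{equationprobzeta}) (a direct consequence of Frobenius's formula in equation (\ref{equation1}) together with definition (\ref{twistedzetamain})) yields
\[
P_w^G(g) \;=\; P_t^G(g) \;=\; \frac{1}{|G|}\,\zeta_G(t,g),
\]
which is exactly alternative (ii). So the two-line argument is purely a matter of assembling the previous corollary and the definitions of $P_t^G$ and $\zeta_G$.

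To turn this into the quantitative ``fibre sizes are determined'' statement, I would then point out that Theorem \ref{main}, combined with equation (\ref{twistedzetamain}) written as $\zeta_G(s,g)=\sum_i \zeta_G^i(s,g)$, gives an explicit expression
\[
\zeta_G(t,g) \;=\; |G/G'|\sum_{i}p^{-i(1+t)}\Bigl(K_G^i(g)-\tfrac{1}{q-1}V_G^i(g)\Bigr),
\]
valid for the class-$2$ exponent-$p$ groups under consideration (here the Lazard correspondence is trivially available since $c=2<p$, and the hypotheses $\mathfrak{g}'\cong\mathbf{k}^b$ and $\mathfrak{g}/\mathfrak{z}\cong\mathbf{k}^a$ are automatic once we view $G$ as $G(\mathfrak{O})$ for a suitable base extension). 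Plugging this into (ii) yields fully explicit fibre-size formulae in both cases.

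There is no real obstacle beyond citing the right ingredients: the dichotomy in the preceding corollary is the substantive input (proved elsewhere in \cite{Levy1}), and once it is granted the result is immediate from the definition of $\zeta_G(t,g)$ and Theorem \ref{main}. The only point that would merit a line of comment is that the hypothesis ``$p$ odd'' is used to ensure that the exponent-$p$ class-$2$ group $G$ falls within the Lazard range $c<p$ required for Theorem \ref{main}; for $p=2$ the exponent-$p$ assumption already forces $G$ to be abelian, a case which collapses into alternative (i) anyway.
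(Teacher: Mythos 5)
Your proposal is correct and follows exactly the route the paper intends: the paper gives no separate proof beyond remarking that the result follows from the preceding corollary of \cite{Levy1} together with Theorem \ref{main}, and your assembly of the dichotomy ($v=x$ giving the uniform case, $v=c_t$ giving $P_t^G(g)=\frac{1}{|G|}\zeta_G(t,g)$ via equation (\ref{equationprobzeta})) is precisely that argument. Your added remark on why $p$ must be odd (to stay in the Lazard range $c<p$) is a sensible clarification the paper leaves implicit.
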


\section{Examples}\label{examples}

We compute the sums $\zeta^i_G(s,g)$ for various relatively free $p$-groups with exponent $p$. Note that for $i=0$, the only vector $\bold{y}$ giving rise to a matrix $B(\bold{y})$ such that $\rk(B(\bold{y}))=0$ (see equations (\ref{eqnK}) and (\ref{eqnV})) is $\bold{y} = \bold{0}$. Suppose that $i\neq 0$. Since the rank of the matrix $B(\bold{y})$, for given $\bold{y}\in\mathbb{F}_q^b$, is invariant under scalar multiplication the rank of the matrix $B(\tilde{\bold{y}})$ with $\tilde{\bold{y}}=(\tilde{y_1}:...:\tilde{y_b})\in\mathbb{P}^{b-1}(\mathbb{F}_q)$ is well defined. Under the Lazard correspondence we may identify $1\neq g\in G'$ with an element $\bold{g}\in\mathfrak{g}'\cong\mathbb{F}_q^{b}$, the associated Lie algebra, and similarly identify $\bold{g}$ with $\tilde{\bold{g}}\in\mathbb{P}^{b-1}(\mathbb{F}_q)$. Since the matrix $B$ is skew-symmetric its determinant $\det(B)$ is a square whose square root $\Pf(B):=\sqrt{\det(B)}$ is the \textit{Pfaffian} of $B$. If $a$ is odd, then $\Pf(B) = 0$. Assume that $\Pf(B) \neq 0$. Then $\Pf(B)$ defines a hypersurface in $\mathbb{P}^{b-1}$ and the $\mathbb{F}_q$-rational points $\tilde{\bold{y}}$ of this hypersurface correspond to matrices $B(\tilde{\bold{y}})$ of a certain non-maximal rank. The $\mathbb{F}_q$-rational points which do not lie on the hypersurface will be of maximal rank $       a$. We refer to points $\tilde{\bold{y}}\in\mathbb{P}^{b-1}(\mathbb{F}_q)$ as being `of rank 2i' for some $i$ if the associated matrix $B(\tilde{\bold{y}})$ is of rank $2i$. The condition $g\in\Ker(\omega_{\bold{y}})$ in the definition of $K^i_G(g)$ (see equation (\ref{eqnK})) defines a hyperplane $H_g$ given by the dot product $\tilde{\bold{g}}.\tilde{\bold{y}}=0$ in $\mathbb{P}^{b-1}(\mathbb{F}_q)$. We may thus talk about the `hyperplane defined by $g$' as $H_g$. The numbers $K^i_G(g)$ are simply the number of $\mathbb{F}_q$-rational points of this hyperplane which intersect the hypersurface defined by $\Pf(B)$ in $\mathbb{P}^{b-1}(\mathbb{F}_q)$ giving rise to a point `of rank $2i$'.

We will now proceed with some examples. In each case it turns out that the matrices $B(\bold{Y})$ have the form
$$
B(\bold{Y}) =
\begin{pmatrix}
0&U(\bold{Y})\\
-U(\bold{Y})^{\tr}& 0
\end{pmatrix}
$$
where $U(\bold{Y})$ is a matrix.  

\begin{example}[Heisenberg group, H($\mathbb{F}_q$)]
Suppose that $G$ is the Heisenberg group H($\mathbb{F}_q$). The matrix $U(\bold{Y})$, where $\bold{Y}=(Y_1)$ has a single variable, is simply the $1\times 1$ matrix with entry $Y_1$. There are two cases, $i=0$ and $i=1$, for the rank of the matrix $U(\bold{Y})$. The number of vectors $\bold{y}$ in each case is $1$ and $q-1$ respectively. When $g=1$ is the identity, we have $K^0_G(1)=1$ and $V^0_G(1) = 0$ so that $\zeta^0_G(s,1)=q^2$. We also have $K^1_G(1)=q-1$ and $V^1_G(1) = 0$ so that $\zeta^1_G(s,1)=q^{1-s}(q-1)$. Together, this gives us 
$$
\zeta_G(s,1)=q^2+q^{-s+1}(q-1)
$$ 
and when $s=1$ this is simply $k(G)$, the class number. 

Suppose now that $1\neq g\in G'$. This occurs with multiplicity $(q-1)$. It is not hard to see that $\zeta^0_G(s,g)=q^2$ since $K^0_G(g)=1$ and $V^0_G(g) = 0$. Also, $K^1_G(g)=0$ and $V^1_G(g) = q-1$ so that 
$$
\zeta_G(s,g)=q^2-q^{-s+1}.
$$

\end{example}

\begin{example}[A quadric surface in $\mathbb{P}^2(\mathbb{F}_q)$]
Let $\mathfrak{g}$ be the $7$-dimensional nilpotent $\mathbb{F}_q$-Lie algebra of class $2$ with $\mathbb{F}_q$-basis $(x_1,...,x_4,y_1,y_2,y_3)$ subject to the relations $[x_1,x_3]=y_1$, $[x_1,x_4]=y_2$, $[x_2,x_3]=y_3$, $[x_2,x_4]=y_1$. With respect to this basis we have
$$
U(\bold{Y}) =
\begin{pmatrix}
Y_1&Y_2\\
Y_3& Y_1
\end{pmatrix}.
$$
The determinant of this matrix is defines the quadric surface $Y_1^2-Y_2Y_3$ in $\mathbb{P}^2(\mathbb{F}_q)$. There are three cases, $i=0,1$ and $2$, for the rank of the matrix $U(\bold{Y})$. The number of such vectors $\bold{y}$ in each case is $1$, $(q+1)(q-1)$ and $q^2(q-1)$ respectively. As usual, the $i=0$ case corresponds to when $\bold{y}$ is zero, $i=1$ when $\tilde{\bold{y}}$ lies on the curve and $i=2$ otherwise. When $g$ is the identity we can compute 
$$
K_G(1)=(1,(q+1)(q-1),q^2(q-1))
$$ 
and
$V_G(1)=(0,0,0)$ so that $k(G)=q^4+q^2(q+1)(q-1)+q^2(q-1)$.

Now suppose that $g\neq 1$. We have two cases. The first case is when $H_g$ corresponds to a tangent of the surface $Y_1^2=Y_2Y_3$. This occurs with multiplicity $(q+1)(q-1)$. In this case 
$$
K_G(g)=(1,(q-1),q(q-1))
$$ 
and 
$$
V_G(g)=(0,q(q-1),(q^2-q)(q-1)).
$$

The second case is when $H_g$ corresponds to a line which intersects the surface $Y_1^2=Y_2Y_3$ at two distinct points. This occurs with multiplicity $q^2(q-1)$. In this case 
$$
K_G(g)=(1,2(q-1),(q-1)(q-1))
$$ 
and 
$$
V_G(g)=(0,(q-1)(q-1),(q^2-q+1)(q-1)).
$$
\end{example}

\begin{example}[A quadric surface in $\mathbb{P}^3(\mathbb{F}_q)$]
Let $\mathfrak{g}$ be the $8$-dimensional nilpotent $\mathbb{F}_q$-Lie algebra of class $2$ with $\mathbb{F}_q$-basis $(x_1,...,x_4,y_1,...,y_4)$ subject to the relations $[x_1,x_3]=y_1$, $[x_1,x_4]=y_2$, $[x_2,x_3]=y_3$, $[x_2,x_4]=y_4$. With respect to this basis we have
$$
U(\bold{Y}) =
\begin{pmatrix}
Y_1&Y_2\\
Y_3& Y_4
\end{pmatrix}.
$$
The determinant of this matrix defines the quadric surface $Y_1Y_4-Y_2Y_3$ in $\mathbb{P}^3(\mathbb{F}_q)$. There are three cases, $i=0,1$ and $2$, for the rank of the matrix $U(\bold{Y})$. The number of vectors $\bold{y}$ in each case is $1$, $(q+1)^2(q-1)$ and $(q^3-q)(q-1)$ respectively. As usual, the $i=0$ case corresponds to when $\bold{y}$ is zero, $i=1$ when $\tilde{\bold{y}}$ lies on the curve and $i=2$ otherwise. When $g$ is the identity we compute 
$$
K_G(1)=(1,(q+1)^2(q-1),q^4-1-(q+1)^2(q-1))
$$ 
and 
$V_G(1)=(0,0,0)$ so that $k(G)=q^4+q^2(q+1)^2(q-1)+q^4-1-(q+1)^2(q-1)$. 

Now suppose that $g\neq 1$. We have two cases. The first case is when $g$ corresponds to a tangent of the surface $Y_1Y_4=Y_2Y_3$. This occurs with multiplicity $(q+1)^2(q-1)$. In this case 
$$
K_G(g)=(1,(2q+1)(q-1),(q^2-q)(q-1))
$$ 
and 
$$
V_G(g)=(0,q^2(q-1),(q^3-q^2)(q-1)).
$$

The second case is when $g$ corresponds to a plane which intersects the surface $Y_1Y_4=Y_2Y_3$ and is non-tangent. This occurs with multiplicity $(q^3-q)(q-1)$. In this case 
$$
K_G(g)=(1,(q+1)(q-1),q^2(q-1))
$$ 
and 
$$
V_G(g)=(0,q(q+1)(q-1),(q^3-q^2-q)(q-1)).
$$
\end{example}

\noindent The following examples were studied by Boston \& Isaacs \cite{BostonIsaacs}.

\begin{example}[Elliptic curves in $\mathbb{P}^2(\mathbb{F}_p)$]
Let $p$ be a prime and $\alpha\in\mathbb{F}_p^*$. Let $\mathfrak{g}_{\alpha}$ be the $9$-dimensional nilpotent $\mathbb{F}_p$-Lie algebra of class $2$ with $\mathbb{F}_p$-basis $(x_1,...,x_6,y_1,y_2,y_3)$ subject to the relations $[x_1,x_4]=y_1$, $[x_1,x_5]=y_2$, $[x_1,x_6]=\alpha y_3$, $[x_2,x_4]=y_3$, $[x_2,x_5]=y_1$, $[x_2,x_6]=y_2$, $[x_3,x_4]=y_3$, $[x_3,x_6]=y_1$. With respect to this basis we have
$$
U(\bold{Y}) =
\begin{pmatrix}
Y_1&Y_2&\alpha Y_3\\
Y_3& Y_1&Y_2\\
Y_3&0&Y_1
\end{pmatrix}.
$$
The determinant of this matrix defines an elliptic curve $E_{\alpha}$ in $\mathbb{P}^2(\mathbb{F}_p)$ and let $n_{\alpha}$ denote the number of $\mathbb{F}_p$-rational points of the curve $E_{\alpha}$. There are three cases, $i=0,1$ and $2$, for the rank of the matrix $U(\bold{Y})$. The number of such vectors $\bold{y}$ in each case is $1$, $n_{\alpha}(p-1)$ and $(p^2+p+1-n_{\alpha})(p-1)$ respectively. As usual, the $i=0$ case corresponds to when $\bold{y}$ is zero, $i=1$ when $\tilde{\bold{y}}$ lies on the curve and $i=2$ otherwise. When $g$ is the identity we can compute 
$$
K_G(1)=(1,n_{\alpha}(p-1),(p^2+p+1-n_{\alpha})(p-1))
$$ 
and $V_G(1)=(0,0,0)$ so that $k(G)=p^6+p^2n_{\alpha}(p-1)+(p^2+p+1-n_{\alpha})(p-1)$. 

Now suppose that $g\neq 1$. Let $k_{\alpha}$ denote the number of inflection points of $E_{\alpha}$. We have four cases depending on how many times the line $H_g$ corresponding to $g$ intersects the elliptic curve $E_{\alpha}$ at rational points, this can be zero, once, twice or three times occurring with multiplicities $p^2+p+1-n_{\alpha}(p+1)+\frac{2}{3}{n_{\alpha} \choose 2}+\frac{n_{\alpha}-k}{3}$, $n_{\alpha}(p+1)-(n_{\alpha}-k)-{n_{\alpha} \choose 2}$, $(n_{\alpha}-k)(p-1)$ and ${n_{\alpha} \choose 2}$ respectively. 

Suppose that $1\neq g$ defines a line which intersects the curve at $m$ rational points where $m=0,1,2,3$. We have 
$$
K_G(g)=(1,m(p-1), (p+1-m)(p-1))
$$ 
and 
$$
V_G(g)=(0,(n_{\alpha}-m)(p-1),(p^2+m-n_{\alpha})(p-1)).
$$
\end{example}

%
%
%
%

\section{Acknowledgements}
I would like to thank Christopher Voll for his support, guidance and insight throughout this project and for numerous helpful discussions.

\bibliographystyle{plain}
\bibliography{refs}

\begin{thebibliography}{10}

\bibitem{Abert}
M.~Ab{\'e}rt.
\newblock On the probability of satisfying a word in a group.
\newblock {\em J. Group Theory}, 9(5):685--694, 2006.

\bibitem{BostonIsaacs}
N.~Boston and I.~M. Isaacs.
\newblock Class numbers of $p$-groups of a given order.
\newblock {\em J. Algebra}, 279(2):810--819, 2004.

\bibitem{BoySab}
M.~Boyarchenko and M.~Sabitova.
\newblock The orbit method for profinite groups and a $p$-adic analogue of
  brown's theorem.
\newblock {\em Israel J. Math}, 165:67--91, 2008.

\bibitem{DPSSh}
J.~D. Dixon, L.~Pyber, \'{A}. Seress, and A.~Shalev.
\newblock Residual properties of free groups and probabilistic methods.
\newblock {\em J. reine angew. Math. (Crelle's)}, 556:159--172, 2003.

\bibitem{GarionShalev}
S.~Garion and A.~Shalev.
\newblock Commutator maps, measure preservation and {T}-systems.
\newblock {\em Trans. Amer. Math. Soc.}, 361(9):4631--4351, 2009.

\bibitem{Jon}
J.~Gonz\'{a}lez-S\'{a}nchez.
\newblock Kirillov's orbit method for $p$-groups and pro-$p$ groups.
\newblock {\em Comm. Algebra}, 37(12):4476--4488, 2009.

\bibitem{Isaacs}
I.~M. Isaacs.
\newblock {\em Character Theory of Finite Groups}, volume 359.
\newblock American Math. Soc., 1976.

\bibitem{Jaikinzeta}
A.~Jaikin-Zapirain.
\newblock Zeta function of representations of compact $p$-adic analytic groups.
\newblock {\em J. Amer. Math. Soc.}, (19):91--118, 2006.

\bibitem{Khukhro}
E.~I. Khukrho.
\newblock {\em $p$-automorphisms of finite $p$-groups}, volume 246 of {\em
  London Mathematical Society Lecture Note Series}.
\newblock Cambridge University Press, Cambridge, 1198.

\bibitem{LaSh}
M.~Larsen and A.~Shalev.
\newblock Fibres of word maps and some applications.
\newblock {\em J. Algebra}, 354:36--48, 2012.

\bibitem{Levy1}
M.~Levy.
\newblock On the probability of solving a word in nilpotent groups of class
  $2$.
\newblock (ar$\chi$iv 1101.4286v1).

\bibitem{VollOBrien}
E.~A. O'Brien and C.~Voll.
\newblock Enumerating classes and characters of $p$-groups.
\newblock {\em Trans. Amer. Math. Soc.}, 367:7775--7796, 2015.

\end{thebibliography}

\end{document}